\providecommand{\U}[1]{\protect\rule{.1in}{.1in}}
\newtheorem{theorem}{Theorem} [section]
\newtheorem{lemma}[theorem]{Lemma}
\newtheorem{problem}[theorem]{Problem}
\newenvironment{proof}[1][Proof]{\noindent\textbf{#1.} }{\ \rule{0.5em}{0.5em}}
\begin{document}

\author{Vadim E. Levit\\Department of Mathematics\\Ariel University, Ariel, Israel\\levitv@ariel.ac.il\\.\\David Tankus\\Department of Software Engineering\\Sami Shamoon College of Engineering, Ashdod, Israel\\davidt@sce.ac.il}
\title{Recognizing Relating Edges in Graphs without Cycles of Length $6$}
\date{}
\maketitle

\begin{abstract}
A graph $G$ is {\it well-covered} if all maximal independent sets are of the same cardinality. Let $w:V(G) \longrightarrow\mathbb{R}$ be a weight function. Then $G$ is {\it $w$-well-covered} if all maximal independent sets are of the same weight. An edge $xy \in E(G)$ is \textit{relating} if there exists an independent set $S$ such that both $S \cup \{x\}$ and $S \cup \{y\}$ are maximal
independent sets in the graph. If $xy$ is relating then $w(x)=w(y)$ for
every weight function $w$ such that $G$ is $w$-well-covered. Relating edges play an important role in investigating $w$-well-covered graphs.

The decision problem whether an edge in a graph is relating is
\textbf{NP-}complete \cite{bnz:related}. We prove that the problem remains \textbf{NP-}complete when the input is restricted to graphs without cycles of length $6$. This is an unexpected result because recognizing relating edges is known to be polynomially solvable for graphs without cycles of lengths $4$ and $6$ \cite{lt:relatedc4}, graphs without cycles of lengths $5$ and $6$
\cite{lt:wc456}, and graphs without cycles of lengths $6$ and $7$ \cite{tankus:c67}.

A graph $G$ belongs to the class $\mathbf{W_2}$ if every two pairwise disjoint independent sets in $G$ are included in two pairwise disjoint maximum independent sets \cite{Staples:thesis}. It is known that if $G$ belongs to the class $\mathbf{W_2}$ then it is well-covered. A vertex $v \in V(G)$ is {\it shedding} if for every independent set $S \subseteq V(G) \setminus N[v]$ there exists $u \in N(v)$ such that $S \cup \{u\}$ is independent \cite{w:shed}. Shedding vertices play an important role in studying the class $\mathbf{W_2}$. Recognizing shedding vertices is co-\textbf{NP-}complete, even when the input is restricted to triangle-free graphs \cite{lt:w2}. We prove that the problem is co-\textbf{NP-}complete for graphs without cycles of length $6$.
\end{abstract}

\section{Introduction}
\subsection{Definitions and Notation}
Throughout this paper $G$ is a simple (i.e., a finite, undirected,
loopless and without multiple edges) graph with vertex set $V(G)$ and edge set $E(G)$. Cycles of $k$ vertices are denoted by $C_{k}$. When we say that $G$ does not
contain $C_{k}$ for some $k \geq3$, we mean that $G$ does not admit subgraphs
isomorphic to $C_{k}$. It is important to mention that these subgraphs are not
necessarily induced. Let $\mathcal{G}(\widehat{C_{i_{1}}},..,\widehat{C_{i_{k}%
}})$ denote the family of all graphs which do not contain $C_{i_{1}}$,...,$C_{i_{k}}$.

Let $S\subseteq V(G)$ be a non-empty set of vertices, and let $i\in\mathbb{N}$. Then
\[
N_{i}(S)=\{v \in V(G)| \ min_{s\in S}\ d(v,s)=i\}, \ 
N_{i}[S]=\{v \in V(G)| \ min_{s\in S}\ d(v,s)\leq i\}
\]
where $d(x,y)$ is the minimal number of edges required to construct a path
between $x$ and $y$, or infinite if such a path does not exist. If $i\neq j$ then $N_{i}(S)\cap N_{j}(S) = \varnothing$. We abbreviate $N_{1}(S)$ and $N_{1}[S]$ to $N(S)$ and $N[S]$, respectively. If
$S=\{v\}$ for some $v\in V(G)$, then $N_{i}(\{v\})$, $N_{i}[\{v\}]$, $N(\{v\})$, and $N[\{v\}]$, are abbreviated to $N_{i}(v)$, $N_{i}[v]$, $N(v)$, and $N[v]$, respectively.

Let $T\subseteq V(G)$. Then $S$ \textit{dominates} $T$ if $T \subseteq N[S]$. If $N[S]=V(G)$ then $S$ dominates
the whole graph. The induced subgraph of $G$ with vertex set $S \subseteq V(G)$ is $G[S]$, and denote $G \setminus S = G[V(G) \setminus S]$.

\subsection{Relating Edges}
A set of vertices $S\subseteq V(G)$ is \textit{independent }if for every $x,y\in
S$, $x$ and $y$ are not adjacent. Obviously, an empty set is
independent. An independent set is called {\it maximal} if it is not contained in another independent set. An independent set is {\it maximum} if the graph does not contain an independent set with a higher cardinality.
A graph is called \textit{well-covered} if every maximal independent set is maximum. The problem of finding a maximum cardinality
independent set in an input graph is \textbf{NP}-hard. However, if the input is
restricted to well-covered graphs, then a maximum cardinality independent set
can be found polynomially using the \textit{greedy algorithm}.

Let $w:V(G) \longrightarrow\mathbb{R}$ be a weight function defined on the
vertices of $G$. For every set $S \subseteq V(G)$, define $w(S)=\Sigma_{s \in S}w(s)$. Then $G$ is {\it $w$-well-covered} if all maximal independent sets of $G$
are of the same weight. The set of weight functions $w$ for which $G$ is
$w$-well-covered is a \textit{vector space} \cite{cer:degree}. That vector space is denoted $WCW(G)$ \cite{bnz:related}.

The recognition of well-covered graphs is known to be \textbf{co-NP}-complete.
This was proved independently in \cite{cs:note} and \cite{sknryn:compwc}. The problem remains \textbf{co-NP}-complete even when the input is restricted to $K_{1,4}$-free graphs \cite{cst:structures}, or to circulant graphs \cite{bh:circulant}. However,
the problem is polynomially solvable for $K_{1,3}$-free graphs
\cite{tata:wck13f,tata:wck13fn}, for graphs with girth at least $5$
\cite{fhn:wcg5}, for graphs that contain neither $4$- nor $5$-cycles
\cite{fhn:wc45}, for graphs with a bounded maximal degree \cite{cer:degree},
and for chordal graphs \cite{ptv:chordal}.

Obviously, a graph $G$ is well-covered if and only if $w \equiv 1$ belongs to the vector space $WCW(G)$. Hence, for every family graphs, if recognizing well-covered graphs is \textbf{co-NP}-complete, then finding $WCW(G)$ is \textbf{co-NP}-hard. On the other hand, if for a specific family of graphs finding $WCW(G)$ can be completed polynomially then also recognizing well-covered graphs is a polynomial task. Polynomial algorithms which find  $WCW(G)$ are known for claw-free graphs \cite{lt:equimatchable} and for graphs without cycles of lengths 4, 5 and 6 \cite{lt:wc456}.

An edge $xy \in E(G)$ is \textit{relating} if there exists an independent set $S$ such that both $S \cup \{x\}$ and $S \cup \{y\}$ are maximal
independent sets in the graph. If $xy$ is relating then $w(x)=w(y)$ for
every weight function $w$ such that $G$ is $w$-well-covered. Relating edges play an important role in investigating $w$-well-covered graphs.

\begin{problem}
\label{reprob}
{\bf RE}\\
Input: A graph $G$, and an edge $e \in E(G)$.\\
Question: Is $e$ relating?
\end{problem}

A witness that $xy$ is a relating edge is an independent set $S$ such that both $S \cup \{x\}$ and $S \cup \{y\}$ are maximal
independent sets in the graph. 
The decision problem whether an edge in an input graph is relating is
\textbf{NP-}complete \cite{bnz:related}, and it remains \textbf{NP-}complete even
when the input is restricted to graphs without cycles of lengths $4$ and $5$
\cite{lt:relatedc4} or to bipartite graphs \cite{lt:npc}. However, recognizing relating edges can be done in polynomial time if the
input is restricted to graphs without cycles of lengths $4$ and $6$
\cite{lt:relatedc4}, to graphs without cycles of lengths $5$ and $6$
\cite{lt:wc456}, and to graphs without cycles of lengths $6$ and $7$ \cite{tankus:c67}.

\subsection{Shedding Vertices}
A vertex $v \in V(G)$ is {\it shedding} if for every independent set $S \subseteq V(G) \setminus N[v]$ there exists $u \in N(v)$ such that $S \cup \{u\}$ is independent. Equivalently, $v$ is shedding if there does not exist an independent set in $V(G) \setminus N[v]$ which dominates $N(v)$ \cite{w:shed}. It is easy to see that $v$ is shedding if and only if there does not exist an independent set in $N_{2}(v)$ which dominates $N(v)$. Shedding vertices are also called {\it extendable} \cite{fhn:wcg5}. 

\begin{problem}
\label{shedrob}
{\bf SHED}\\
Input: A graph $G$, and a vertex $v \in V(G)$.\\
Question: Is $v$ shedding?
\end{problem}

If $v$ is not shedding, a {\it witness} for being not shedding is a an independent set $S \subseteq N_{2}(v)$ which dominates $N(v)$. It is proved in \cite{lt:w2} that recognizing shedding vertices is co-\textbf{NP-}complete, even when the input is restricted to triangle-free graphs, but polynomial solvable for claw-free graphs, for graphs without cycles of length 5, and for graphs without cycles of lengths 4 and 6. Theorem \ref{shedrec6} shows the connection between the RE problem and the SHED problem.

\begin{theorem}
\cite{lt:w2}
\label{shedrec6}
Let $G$ be a graph without cycles of lengths 4, 5 and 6, and $xy \in E(G)$. Suppose $N(x) \cap N(y) = \varnothing$, $d(x) \geq 2$ and $d(y) \geq 2$. The following assertions are equivalent.
\begin{enumerate}
\item None of $x$ and $y$ is a shedding vertex.
\item $xy$ is a relating edge.
\end{enumerate}
\end{theorem}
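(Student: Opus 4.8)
The plan is to recast both conditions as statements about independent sets dominating prescribed neighbourhoods, and then to pass between the two using one forbidden cycle length for each implication. I would first record two reformulations. By the characterization of shedding vertices given just before Problem \ref{shedrob}, a vertex $v$ is \emph{not} shedding exactly when some independent $A \subseteq N_2(v)$ satisfies $N(v) \subseteq N(A)$, i.e. $A$ dominates $N(v)$. For the edge, I would prove that $xy$ is relating if and only if there is an independent set $S \subseteq V(G) \setminus (N[x] \cup N[y])$ with $V(G) \setminus \{x,y\} \subseteq N[S]$. Indeed, if $S\cup\{x\}$ and $S\cup\{y\}$ are maximal independent then $S$ avoids $N[x]$ and $N[y]$, and since the hypothesis $N(x)\cap N(y)=\varnothing$ gives $N[x]\cap N[y]=\{x,y\}$, maximality forces every vertex other than $x,y$ into $N[S]$; conversely such an $S$ clearly makes $S\cup\{x\}$ and $S\cup\{y\}$ maximal. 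The point of this reformulation is that $S$ need not dominate $x$ or $y$ themselves.

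For (2) $\Rightarrow$ (1), starting from a relating witness $S$ I would take $A=S\cap N_2(x)$. Each $u\in N(x)\setminus\{y\}$ lies outside the maximal set $S\cup\{y\}$, hence has a neighbour there; that neighbour is not $y$ (otherwise $u\in N(x)\cap N(y)$), so it lies in $A$, and thus $A$ dominates $N(x)\setminus\{y\}$. Using $d(y)\ge 2$ I pick a neighbour $z\neq x$ of $y$, which lies in $N_2(x)$ and dominates $y$. The only thing to check is that $A\cup\{z\}$ is independent: were $z$ adjacent to some $a\in A$, then choosing a neighbour $u\in N(x)$ of $a$ produces the cycle $x,u,a,z,y$, a $C_5$, which is excluded. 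Hence $x$ is not shedding, and the roles of $x$ and $y$ are symmetric.

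For (1) $\Rightarrow$ (2), let $A\subseteq N_2(x)$ and $B\subseteq N_2(y)$ witness that $x$ and $y$ are not shedding; I must assemble them into an $S$ of the reformulated kind. Three defects arise: $A$ may meet $N(y)$, $B$ may meet $N(x)$, and $A\cup B$ may have cross edges. The first two are harmless: if $a\in A\cap N(y)$ had a neighbour $u\in N(x)\setminus\{y\}$, then $x,u,a,y$ would be a $C_4$, so such an $a$ dominates nothing in $N(x)$ except $y$, which we may discard; thus $A'=A\setminus N(y)$ still dominates $N(x)\setminus\{y\}$, and symmetrically $B'=B\setminus N(x)$ dominates $N(y)\setminus\{x\}$, with both now contained in $V(G)\setminus(N[x]\cup N[y])$. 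For the cross edges, an edge $ab$ with $a\in A'$, $b\in B'$ together with neighbours $p\in N(x)$ of $a$ and $q\in N(y)$ of $b$ would give the cycle $x,p,a,b,q,y$, a $C_6$, again excluded; so $A'\cup B'$ is independent. Finally I extend $A'\cup B'$ to a maximal independent set $S$ of $G[V(G)\setminus(N[x]\cup N[y])]$: maximality makes $S$ dominate the remainder of $V(G)\setminus(N[x]\cup N[y])$, while $A'\cup B'\subseteq S$ already dominates $(N(x)\cup N(y))\setminus\{x,y\}$, so $S$ is a relating witness by the reformulation.

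The main obstacle I anticipate is not the high-level strategy but the bookkeeping that makes each forbidden cycle genuine: one must verify that the vertices $x,u,a,z,y$ (respectively $x,u,a,y$ and $x,p,a,b,q,y$) are pairwise distinct, which relies throughout on $N(x)\cap N(y)=\varnothing$, on the degree hypotheses, and on the fact that every neighbour of $y$ other than $x$ lies in $N_2(x)$. The $C_6$ step in (1) $\Rightarrow$ (2) carries the heaviest such verification and is the place where an unnoticed coincidence of vertices could collapse the claimed cycle and break the argument.
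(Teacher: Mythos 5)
Your argument is correct: I checked the reformulation of relating edges (whose forward direction indeed needs $N(x)\cap N(y)=\varnothing$), both implications, and the vertex-distinctness bookkeeping you flagged as the main risk — in each of the claimed $C_4$, $C_5$ and $C_6$ configurations the vertices are pairwise distinct, precisely because $a\in A'$ forces $a\notin N(y)$, $b\in B'$ forces $b\notin N(x)$, and $N(x)\cap N(y)=\varnothing$ separates $p$ from $q$. There is, however, nothing in this paper to compare your proof against: Theorem \ref{shedrec6} is stated here without proof, being quoted from \cite{lt:w2}. What can be said is that your decomposition is exactly the mechanism the authors sketch in their Conclusions when describing the algorithms of \cite{lt:relatedc4}, \cite{lt:wc456} and \cite{tankus:c67}: a witness for a relating edge is assembled from an independent set near $x$ dominating the relevant part of $N(x)$ and an independent set near $y$ dominating the relevant part of $N(y)$, with the absence of $C_6$ guaranteeing that no edges run between the two pieces. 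Your remaining ingredients — using $C_4$-freeness to discard $A\cap N(y)$ and $B\cap N(x)$ while preserving domination of $N(x)\setminus\{y\}$ and $N(y)\setminus\{x\}$, and using $C_5$-freeness together with $d(y)\geq 2$ (resp.\ $d(x)\geq 2$) to attach the extra dominating vertex $z$ in the converse direction — are what is needed to upgrade that algorithmic sketch to the stated equivalence, and they hold up.
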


A graph $G$ belongs to the class $\mathbf{W_{2}}$ if every two disjoint independent sets in $G$ are included in two disjoint maximum independent
sets \cite{Staples:thesis}. All graphs in the class $\mathbf{W_2}$ are well-covered. Recognizing $\mathbf{W_{2}}$ graphs is co-\textbf{NP-}complete \cite{fm:rem3w2}. Shedding vertices play an important role in studying the class $\mathbf{W_2}$ due to Theorem \ref{w2wcshed}.

\begin{theorem}
\label{w2wcshed}
\cite{lm:shed}
For every well-covered graph $G$ having no isolated vertices, the following assertions are equivalent:
\begin{enumerate}
\item $G$ is in the class $\mathbf{W_2}$.
\item All vertices of G are shedding.
\end{enumerate}
\end{theorem}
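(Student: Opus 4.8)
The plan is to prove the two implications separately; $(1)\Rightarrow(2)$ is short, so I would dispatch it first and then concentrate on the converse, where essentially all of the work lies. For $(1)\Rightarrow(2)$, assume $G\in\mathbf{W_2}$ and fix a vertex $v$; I would argue by contradiction using the domination form of the shedding condition. If $v$ is not shedding, there is an independent set $S\subseteq V(G)\setminus N[v]$ dominating $N(v)$. Since $\{v\}$ and $S$ are disjoint independent sets, the defining property of $\mathbf{W_2}$ places them in disjoint maximum independent sets $M_1\ni v$ and $M_2\supseteq S$. As $M_2$ is maximal and $v\notin M_2$, some $w\in N(v)$ lies in $M_2$; but $S$ dominates $N(v)$ while $S\cap N(v)=\varnothing$, so $w$ is adjacent to some $s\in S\subseteq M_2$, contradicting the independence of $M_2$. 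Hence every vertex is shedding.

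For $(2)\Rightarrow(1)$ the goal is to show that any two disjoint independent sets $A$ and $B$ lie in disjoint maximum independent sets, and I would reduce this to the following one-sided statement: \emph{if every vertex of $G$ is shedding, then for any two disjoint independent sets $A$ and $B$ there is a maximal independent set that contains $B$ and is disjoint from $A$.} Granting this, the theorem follows by applying it twice. First I obtain a maximum independent set $M_B\supseteq B$ with $M_B\cap A=\varnothing$; then $A$ and $M_B$ are themselves disjoint independent sets, so a second application yields a maximum independent set $M_A\supseteq A$ with $M_A\cap M_B=\varnothing$. (Every maximal independent set is maximum because $G$ is well-covered.) This produces the required disjoint pair.

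The one-sided statement I would prove by induction on $|A|$. When $A=\varnothing$ one simply extends $B$ to a maximal independent set. For the inductive step, pick $a\in A$; by the induction hypothesis there is a maximal independent set $M\supseteq B$ disjoint from $A\setminus\{a\}$. If $a\notin M$ we are done, so suppose $a\in M$ (note $a\notin B$). Now I invoke shedding of $a$: the set $M\setminus\{a\}$ is an independent subset of $V(G)\setminus N[a]$, so it cannot dominate $N(a)$, whence there is $u\in N(a)$ with $(M\setminus\{a\})\cup\{u\}$ independent. This set has the same cardinality as $M$ and is therefore a maximum, hence maximal, independent set; it still contains $B$, it excludes $a$, and it excludes $A\setminus\{a\}$ because $u\in N(a)$ cannot belong to the independent set $A$. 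Thus it is disjoint from $A$, completing the induction.

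The main obstacle is entirely in the converse: the two ideas one must locate are the reduction to this one-sided extension lemma and the shedding-driven exchange that swaps $a$ for a single neighbor $u$. The point where I would be most careful is the verification that this single-vertex exchange yields a set that is again \emph{maximum} rather than merely independent, which is exactly where well-coveredness enters, together with the bookkeeping ensuring that the swapped set stays disjoint from all of $A$.
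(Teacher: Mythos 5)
Your proof is correct, but there is nothing in the paper to compare it against: Theorem \ref{w2wcshed} is stated with the citation \cite{lm:shed} and no proof is given in this paper, since the result is imported from the literature. Judged on its own merits, your argument is sound and self-contained. The $(1)\Rightarrow(2)$ direction is the standard contradiction argument: a dominating independent set $S\subseteq V(G)\setminus N[v]$ together with $\{v\}$ forms a disjoint pair, and the maximal independent set $M_2\supseteq S$ avoiding $v$ must pick up a neighbor $w\in N(v)$, which is then adjacent to a vertex of $S\subseteq M_2$ --- a contradiction; note this direction uses neither well-coveredness nor the absence of isolated vertices. The $(2)\Rightarrow(1)$ direction is where the content lies, and your two ideas both check out: the reduction of the symmetric $\mathbf{W_2}$ condition to the one-sided statement (contain $B$, avoid $A$), applied twice with the roles swapped, and the induction on $|A|$ with the exchange $M\mapsto (M\setminus\{a\})\cup\{u\}$ justified by the shedding property of $a$. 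The bookkeeping is right: $u\notin M$ since $u\in N(a)$ and $a\in M$, so the exchanged set has cardinality $|M|=\alpha(G)$ and is therefore maximum (hence maximal) precisely because $G$ is well-covered; $u\notin A$ since $A$ is independent and contains $a$; and $B\subseteq M\setminus\{a\}$ since $A\cap B=\varnothing$. One could quibble that your invocation of shedding via ``$M\setminus\{a\}$ cannot dominate $N(a)$'' silently uses the equivalence of the two formulations of shedding, but that equivalence is stated explicitly in the paper's introduction, so this is not a gap.
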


\subsection{SAT}
Let $\mathcal{X}=\{x_{1},...,x_{n}\}$ be a set of 0-1 variables. We define the
set of \textit{literals} $L_{\mathcal{X}}$ over $\mathcal{X}$ by
$L_{\mathcal{X}} = \{x_{i}, \overline{x_{i}} : i = 1,...,n\}$, where
$\overline{x} = 1 - x$ is the \textit{negation} of $x$. A \textit{truth
assignment} to $\mathcal{X}$ is a mapping $t:\mathcal{X} \longrightarrow \{0,1\}$ that assigns a value $t(x_{i}) \in\{0,1\}$ to each variable $x_{i}
\in\mathcal{X}$. We extend $t$ to $L_{\mathcal{X}}$ by putting $t(\overline{x_{i}}) = 1 - t(x_{i})$. A literal $l \in L_{\mathcal{X}}$ is true
under $t$ if $t(l) = 1$. A \textit{clause} over $\mathcal{X}$ is a conjunction
of some literals of $L_{\mathcal{X}}$, such that for every variable $x
\in\mathcal{X}$, the clause contains at most one literal out of $x$ and its
negation. Let $\mathcal{C}=\{c_{1},...,c_{m}\}$ be a set of clauses over $\mathcal{X}$. A truth assignment $t$ to $\mathcal{X}$ \textit{satisfies} a
clause $c_{j} \in\mathcal{C}$ if $c_{j}$ contains at least one true literal
under $t$. The number of times that a variable {\it appears} in $\mathcal{C}$ is the number of clauses that include the variable or its negation. 

\begin{problem}
\label{satprob}
{\bf SAT}\\
Input: A set of variables $\mathcal{X}=\{x_{1},...,x_{n}\}$, and a set of clauses $\mathcal{C}=\{c_{1},...,c_{m}\}$
over $\mathcal{X}$.\\
Question: Is there a truth assignment to $\mathcal{X}$ which satisfies all clauses of $\mathcal{C}$?
\end{problem}
The SAT problem is well-known to be \textbf{NP-}complete \cite{gj:NPC}. The number of times that a variable {\it appears} in an instance of SAT is the number of clauses which contain the variable or its negation. The SAT problem was learned thoroughly in recent years. The complexity statuses of many restricted cases of the problem were found. 

Horn SAT is a restricted case of the SAT problem where every clause contains at most one unnegeted literal. This problem is known to be polynomial solvable \cite{h:hornsat}. MONOTONE SAT is the SAT problem in the restricted case that every clause contains either negated literals or unnegated literals, but not both. MONOTONE SAT is \textbf{NP-}complete \cite{Gold1978}.

Let $k \geq 2$. The $k$-SAT problem is a restricted case of the SAT problem where each clause contains exactly $k$ different literals. For every $k \geq 3$, the $k$-SAT problem is well-known to be \textbf{NP-}complete \cite{gj:NPC}, while the 2-SAT problem is polynomial solvable \cite{eis:2sat}. In the MAX 2-SAT problem the input is a set of clauses of size 2 and a positive integer, $k$. It should be decided whether there exists a truth assignment which satisfies at least $k$ clauses. This problem was proved to be NP-complete \cite{gjs:maxsat2}. Moreover, even if every variable appears 3 times, the MAX 2-SAT problem is NP-complete \cite{h:maxsat}.

MONOTONE 3-SAT is NP-complete when each variable appears exactly 2 times negated and 2 times unnegated \cite{d:m223sat}. On the other hand, 3-SAT is always satisfiable, and therefore polynomial, when each variable appears at most 3 times \cite{tov:sat}. However, allowing clauses of size 2 and 3, with each variable appearing 3 times, is NP-complete \cite{papa:cc}.

The 1-in-3 SAT is another restricted case of the SAT problem, denoted X3SAT. In X3SAT every clause contains 3 literals. It should be decided whether there exists a truth assignment which satisfies exactly one literal in every clause. X3SAT is NP-complete even when all its variables occuring unnegated \cite{s:x3sat}.

\subsection{Main Results}
In Section 2 a new restricted version of the SAT problem, called 23SAT, is defined. It is proved that 23SAT is \textbf{NP}-complete. 

In Section 3 we prove that the SHED problem is co-\textbf{NP}-complete when its input is restricted to graphs without cycles of length 6. The proof is based on a polynomial reduction from the complement of the 23SAT problem. 

In Section 4 it is proved that the RE problem is \textbf{NP}-complete for graphs without cycles of length 6. The proof is based on a polynomial reduction from the complement of the SHED problem.

\section{23SAT}
Let $I = (\mathcal{X}, \mathcal{C})$ be an instance of SAT. A {\it major literal} is a literal in $L_{\mathcal{X}}$ that belongs to at least two clauses of $ \mathcal{C}$, while a {\it minor literal} belongs to only one clause. The following problem is a restricted case of the SAT problem.

\begin{problem}
\label{sat23prob}
{\bf 23SAT}\\
Input: A set of variables $\mathcal{X}=\{x_{1},...,x_{n}\}$, and a set of clauses $\mathcal{C}=\{c_{1},...,c_{m}\}$ over $\mathcal{X}$ such that every clause contains 2 or 3 literals, and every clause contains at most 1 major literal.\\
Question: Is there a truth assignment to $\mathcal{X}$ which satisfies all clauses of $\mathcal{C}$?
\end{problem}

\begin{theorem}
\label{sat23npc} 23SAT is \textbf{NP}-complete.
\end{theorem}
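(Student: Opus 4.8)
The plan is to establish the two standard ingredients of \textbf{NP}-completeness. Membership in \textbf{NP} is immediate: a satisfying truth assignment is a polynomial-size witness checkable in polynomial time, so I would dispose of this in a single sentence. The substance is \textbf{NP}-hardness, for which I would give a polynomial reduction from $3$-SAT, which is \textbf{NP}-complete \cite{gj:NPC}. The guiding idea is that the condition ``at most one major literal per clause'' is far less restrictive than it first appears: one can always buffer each occurrence of a heavily used literal through a brand-new variable, thereby pushing every potentially-major literal into a size-$2$ clause whose only companion is a fresh minor literal, while arranging that every size-$3$ clause consists entirely of fresh minor literals.

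Concretely, starting from a $3$-SAT instance with clauses $c_1,\dots,c_m$, write $c_i=(\ell_{i,1}\vee\ell_{i,2}\vee\ell_{i,3})$. For each $i$ and each $k\in\{1,2,3\}$ I would introduce a fresh variable $u_{i,k}$, replace $c_i$ by the size-$3$ clause $c_i'=(u_{i,1}\vee u_{i,2}\vee u_{i,3})$, and add the size-$2$ implication clause $d_{i,k}=(\overline{u_{i,k}}\vee\ell_{i,k})$, retaining the original variables. The new instance has $4m$ clauses and $n+3m$ variables, so the reduction is clearly polynomial. The bookkeeping that makes it a legal 23SAT instance is the core of the verification: each positive literal $u_{i,k}$ occurs only in $c_i'$ and each negative literal $\overline{u_{i,k}}$ occurs only in $d_{i,k}$, so every $u$-literal is minor. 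Hence each size-$3$ clause $c_i'$ contains three minor literals (zero major), and each size-$2$ clause $d_{i,k}$ contains the minor literal $\overline{u_{i,k}}$ together with $\ell_{i,k}$, the only literal that may be major; every clause therefore has at most one major literal, as required.

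For correctness I would argue equisatisfiability in both directions. Given a satisfying assignment of the $3$-SAT instance, I keep the values of the original variables and set each $u_{i,k}$ true exactly when $\ell_{i,k}$ is true; then every $d_{i,k}$ holds, and since each $c_i$ had a true literal, each $c_i'$ has a true $u_{i,k}$. Conversely, from a satisfying assignment of the constructed instance, the restriction to the original variables works: some $u_{i,k}$ satisfying $c_i'$ forces $\ell_{i,k}$ true through $d_{i,k}$, so $c_i$ is satisfied. Combined with membership in \textbf{NP}, this yields \textbf{NP}-completeness.

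The main obstacle here is conceptual rather than computational: recognizing that the ``at most one major literal'' restriction can be met essentially for free by the buffering gadget, which isolates each original literal inside a two-literal clause and leaves the three-literal clauses populated only by fresh, single-use variables. Once the gadget is in hand, the size count, the occurrence bookkeeping, and the two-directional equisatisfiability check are all routine, so I would present them briefly rather than belabor them.
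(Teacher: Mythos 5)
Your proof is correct, but it takes a genuinely different route from the paper. The paper reduces from general SAT using the classical chain-splitting gadget (the same one used in the textbook SAT-to-$3$-SAT reduction): each clause $c_{j}=\{l_{j,1},\ldots,l_{j,k(j)}\}$ is replaced by the chain $\{l_{j,1},y_{j,1}\},\{\overline{y_{j,1}},l_{j,2},y_{j,2}\},\ldots,\{\overline{y_{j,k(j)-1}},l_{j,k(j)}\}$ with $k(j)-1$ fresh variables, so every new clause has size $2$ or $3$ and carries at most one original (possibly major) literal. You instead reduce from $3$-SAT using a buffering gadget: each literal occurrence $\ell_{i,k}$ is replaced by a fresh variable $u_{i,k}$, with the implication clause $(\overline{u_{i,k}}\vee\ell_{i,k})$ added on the side. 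The trade-offs are mild but real. The paper's reduction works from unrestricted SAT and needs no preprocessing of clause sizes, but its backward direction rests on a counting argument (the $k(j)$ clauses of $f(c_{j})$ cannot all be satisfied by the $k(j)-1$ fresh variables, since each fresh variable contributes a true literal to at most one clause of the chain) which the paper states rather tersely. Your reduction leans on the \textbf{NP}-completeness of $3$-SAT (already cited in the paper) and in exchange gets a cleaner backward direction --- a true $u_{i,k}$ in $c_i'$ forces $\ell_{i,k}$ true through a single implication --- and makes the ``at most one major literal'' bookkeeping maximally transparent, since your size-$3$ clauses contain no potentially major literals at all. Both reductions are polynomial and both verifications are sound, so either one proves the theorem.
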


\begin{proof} 
Clearly, the problem is \textbf{NP}. We prove \textbf{NP}-completeness by a polynomial reduction from SAT. Let $I_{1} = (\mathcal{X}=\{x_{1},...,x_{n}\}, \mathcal{C}=\{c_{1},...,c_{m}\})$ be an instance of SAT. For every $1 \leq j \leq m$, denote $c_{j} = \{l_{j,1},\ldots,l_{j,k(j)}\}$, $k(j) \geq 2$, and define 
\[f(c_{j})=\{\{l_{j,1},y_{j,1}\}, \{\overline{y_{j,1}},l_{j,2},y_{j,2}\},\{\overline{y_{j,2}},l_{j,3},y_{j,3}\},\ldots,\{\overline{y_{j,k(j)-1}},l_{j,k(j)}\}\}\]
where $y_{j,1},\ldots,y_{j,k(j)-1}$ are new variables. Clearly, $y_{j,1},\overline{y_{j,1}},\ldots,y_{j,k(j)-1},\overline{y_{j,k(j)-1}}$ are minor literals. Let 
\[\mathcal{X}' = \mathcal{X} \cup \{y_{j,r} : 1 \leq j \leq m, 1 \leq r \leq k(j)-1\}, \  \ \mathcal{C}' = \bigcup_{1 \leq j \leq m} f(c_{j}) \]
Then $I_{2} = (\mathcal{X}', \mathcal{C}')$ is an instance of 23SAT, since the size of every clause is 2 or 3, and every clause contains at most one major literal. It remains to prove that $I_{1}$ and $I_{2}$ are qeuivalent. 

If $I_{1}$ is positive then there exists a truth assignment $t_{1} : \mathcal{X} \longrightarrow \{0,1\}$ which satisfies $\mathcal{C}$. For every $1 \leq j \leq m$ the fact that $t_{1}$ satisfies $c_{j}$ implies that there exists $1 \leq r(j) \leq k(j)-1$ such that $t_{2}(l_{j,r(j)}) = 1$. Let $t_{2} : \mathcal{X}' \longrightarrow \{0,1\}$ be the following extraction of $t_{1}$. Define $t_{2}(y_{j,r'}) = 1$ for every $1 \leq r' < r(j)$ and $t_{2}(y_{j,r'}) = 0$ for every $r(j) \leq r' \leq k(j)-1$. Clearly, $t_{2}$ satisfies all clauses of $f(c_{j})$ for every $1 \leq j \leq m$. Consequently, $I_{2}$ is positive.

Conversely, if $I_{2}$ is positive then there exists a truth assignment $t_{2} : \mathcal{X}' \longrightarrow \{0,1\}$ which satisfies $\mathcal{C}'$. Hence, $t_{2}$ satisfies $f(c_{j})$ for every $1 \leq j \leq m$. However, $f(c_{j})$ contains $k(j)$ clauses, and $k(j)-1$ variables form $\mathcal{X}' \setminus \mathcal{X}$. Therefore, there exists $1 \leq r(j) \leq k(j)-1$ such that $t_{2}(l_{j,r(j)}) = 1$. Define $t_{1} : \mathcal{X} \longrightarrow \{0,1\}$ by $t_{1}(x_{i})=t_{2}(x_{i})$ for every $1 \leq i \leq n$. Then $t_{1}$ satisfies $c_{j}$ because $t_{1}(l_{j,r(j)}) = 1$ for every $1 \leq j \leq m$. Consequently, $t_{1}$ satisfies $\mathcal{C}$, and $I_{1}$ is positive.
\end{proof} 

Let $I=(\mathcal{X}, \mathcal{C})$ be an instance of 23SAT. A {\it bad pair} in $I$ is a set of 2 clauses $\{c_{1}, c_{2}\} \subseteq \mathcal{C}$ such that there exist 2 literals, $l_{1}$ and $l_{2}$, for which $\{l_{1},l_{2}\} \subseteq c_{1}$ and $\{\overline{l_{1}}, \overline{l_{2}}\} \subseteq c_{2}$.  

\begin{theorem}
\label{sat23withoutc6} There exists a polynomial algorithm which receives as its input an instance of 23SAT, and finds an equivalent instance of 23SAT without bad pairs.
\end{theorem}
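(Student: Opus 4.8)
The plan is to reformulate a bad pair in a convenient way and then remove bad pairs one at a time by a local gadget that uses only one fresh variable and one new clause. Call a variable $v$ \emph{conflicting} for an unordered pair of clauses $\{c,c'\}$ if one of the clauses contains a literal on $v$ and the other contains the opposite literal. Then $\{c,c'\}$ is a bad pair precisely when it has at least two conflicting variables. So it suffices to describe a step that, given a bad pair, strictly decreases the number of its conflicting variables while preserving the 23SAT structure, preserving satisfiability, and creating no new bad pair. I will then iterate this step until none remain.

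The gadget is as follows. Let $\{c_1,c_2\}$ be a bad pair, and let $l_2$ be a literal with $l_2\in c_1$ and $\overline{l_2}\in c_2$ (such a literal exists, since a bad pair has at least two conflicting variables). Introduce a fresh variable $z$, replace the occurrence $\overline{l_2}$ in $c_2$ by $\overline{z}$ to obtain a clause $c_2'$, and add the new clause $D=\{z,\overline{l_2}\}$. Equisatisfiability is what I would verify, not logical equivalence: given a satisfying assignment of the original instance I extend it by $z:=l_2$; conversely, given a satisfying assignment of the new instance, if $c_2'$ is satisfied through $\overline{z}$ then $z$ is false, so $D$ forces $\overline{l_2}$ to be true and hence the original $c_2$ is satisfied, while if $c_2'$ is satisfied through one of its inherited literals then $c_2$ is satisfied directly. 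These two case analyses are routine.

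The heart of the argument --- and the step where the obvious idea fails --- is checking that no new bad pair is created. The tempting move is to force $z\leftrightarrow l_2$ with the two clauses $\{z,\overline{l_2}\}$ and $\{\overline{z},l_2\}$, but these two clauses themselves form a bad pair, so a genuine equivalence gadget is self-defeating; this is precisely why the construction uses the single implication clause $D$. With only $D$ added, I would check that $z$ and $\overline{z}$ are minor literals (each occurs once, in $D$ and in $c_2'$ respectively), so no new major literal appears and the bound of at most one major literal per clause is preserved, as are the clause sizes $2$ and $3$. For bad pairs: the only clause containing $\overline{z}$ is $c_2'$ and the only one containing $z$ is $D$, and since a clause never contains both a variable and its negation, $c_2'$ does not contain $l_2$; a short case check then shows that $D$ has at most one conflicting variable with every clause, and that replacing $\overline{l_2}$ by the fresh $\overline{z}$ cannot increase the number of conflicting variables between $c_2'$ and any other clause. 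In particular the chosen pair loses exactly the conflicting variable on $\mathrm{var}(l_2)$, so its conflict count drops by one, and no pair anywhere gains a conflicting variable.

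Finally, termination and polynomiality follow from a potential argument. Let $\Phi$ be the sum, over all bad pairs, of their numbers of conflicting variables; initially $\Phi=O(m^2)$, since there are $O(m^2)$ pairs of clauses and each pair has at most $3$ conflicting variables. Each step strictly decreases $\Phi$ (the targeted pair's conflict count drops, no pair's count rises, and the added clause $D$ is never part of a bad pair and so contributes $0$), so the loop halts after $O(m^2)$ steps, adding $O(m^2)$ variables and clauses in total, with each step clearly polynomial. The output is an equivalent instance of 23SAT with no bad pairs, as required. The main obstacle, as noted, is the no-new-bad-pair verification, which is what forces the asymmetric one-clause gadget in place of a symmetric equivalence.
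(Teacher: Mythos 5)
Your proof is correct, but it takes a genuinely different route from the paper's. The paper removes a bad pair by \emph{deletion}: since each of $c_{1}$ and $c_{2}$ contains at most one major literal, it picks minor literals among $l_{1}, l_{2}, \overline{l_{1}}, \overline{l_{2}}$ and argues that a suitable partial assignment (either $t(l_{1})=t(\overline{l_{2}})=0$ when $l_{1}$ and $\overline{l_{2}}$ are minor, or $l_{1}=\overline{l_{2}}$ when both $l_{1}$ and $\overline{l_{1}}$ are minor) satisfies every clause it touches; those clauses are then deleted outright, so the instance shrinks and the number of bad pairs is itself a trivially decreasing potential --- deletion can never create a bad pair. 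Your gadget instead \emph{renames an occurrence}: replace $\overline{l_{2}}\in c_{2}$ by a fresh $\overline{z}$ and add the implication clause $D=\{z,\overline{l_{2}}\}$, an occurrence-splitting trick in the spirit of Tovey-style reductions. What your route buys: the soundness of a single step never invokes the at-most-one-major-literal hypothesis (you only verify it is preserved, which you do correctly, since $\overline{l_{2}}$ merely migrates from $c_{2}$ to $D$ and all occurrence counts are unchanged), so your construction would de-conflict any instance with clauses of sizes $2$ and $3$, not only 23SAT instances; your key checks --- that the only clause containing $\overline{z}$ is $c_{2}'$, which cannot contain $l_{2}$, so $D$ never joins a bad pair, and that conflict counts of pairs involving $c_{2}'$ can only decrease --- are exactly the right ones and are sound. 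What the paper's route buys: a shrinking instance and a one-line termination argument, with no need for your summed potential $\Phi$ or the $O(m^{2})$ growth in variables and clauses. Also note that your observation that ``equivalent'' here means equisatisfiable (not logical equivalence) matches the paper's own usage, as its proof likewise deletes variables and clauses. Both arguments are polynomial; yours trades the paper's essential reliance on the 23SAT structure for the more delicate no-new-bad-pair bookkeeping, which you carried out correctly.
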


\begin{proof} 
The following algorithm receives as its input an instance $I_{1}$ of 23SAT with bad pairs. The algorithm finds an equivalent instance, $I_{2}$, of 23SAT such that the number of bad pairs in $I_{2}$ is smaller than the number of bad pairs in $I_{1}$. By invoking the algorithm repeatedly, one can find an instance of 23SAT which is equivalent to the original one, and does not contain bad pairs.

Denote $I_{1}=(\mathcal{X}, \mathcal{C})$. There exist clauses $c_{1}, c_{2} \in \mathcal{C}$, and literals, $l_{1}$ and $l_{2}$, such that $\{l_{1},l_{2}\} \subseteq c_{1}$ and $\{\overline{l_{1}}, \overline{l_{2}}\} \subseteq c_{2}$. By definition of 23SAT, every clause contains at most one major literal. Assume without loss of generality that $l_{1}$ is a minor literal. At least one of $\overline{l_{2}}$ and $\overline{l_{1}}$ is a minor literal. 

If $\overline{l_{2}}$ is a minor literal then construct a truth assignment $t$ for $I_{1}$ by assigning $t(l_{1}) = t(\overline{l_{2}}) = 0$. This assigment satisfies both $c_{1}$ and $c_{2}$. Moreover, if $I_{1}$ contains other clauses with $\overline{l_{1}}$ and $l_{2}$, these clauses are satisfied, too. Let $I_{2}$ be the instance of 23SAT obtained from $I_{1}$ by omitting all clauses which contain $\overline{l_{1}}$ or $l_{2}$. 

We show that $I_{1}$ and $I_{2}$ are equivalent. If there exists a truth assignment $t_{2}$ that satisfies $I_{2}$, then assign $t(x) = t_{2}(x)$ for every variable $x$ of $I_{2}$. Clearly, $t$ satisfies $I_{1}$. On the other hand, since every clause of $I_{2}$ belongs also to $I_{1}$, if there does not exist a truth assigment that satisfies $I_{2}$, then there does not exist a truth assignment which satisfies $I_{1}$. Therefore, $I_{1}$ and $I_{2}$ are equivalent, and the number of bad pairs in $I_{2}$ is smaller than the number of bad pairs in $I_{1}$.

Suppose $\overline{l_{1}}$ is a minor literal. Assigning $l_{1}=\overline{l_{2}}$ satisfies both $c_{1}$ and $c_{2}$, and does not affect the other clauses of $I_{1}$. Hence, let $I_{2}$ be the instance of 23SAT obtained from $I_{1}$ by omitting $c_{1} $ and $c_{2}$. Clearly, $I_{2}$ is equivalent to $I_{1}$, and the number of bad pairs in $I_{2}$ is smaller than the number of bad pairs in $I_{1}$.
\end{proof}

\section{Shedding Vertices}
Let $I = (\mathcal{X}=\{x_{1},...,x_{n}\}, \mathcal{C}=\{c_{1},...,c_{m}\})$ be an instance of SAT. Define $G_{I}$ to be the following graph. 
\[V(G_{I}) = \{v\} \cup \{ w_{j} : 1 \leq j \leq m\} \cup \{u_{i}, u_{i}' : 1 \leq i \leq n\}\] 
\[E(G_{I})= \{vw_{j} : 1 \leq j \leq m\} \cup \{w_{j}u_{i} : x_{i} \in c_{j}\} \cup \{w_{j}u_{i}' : \overline{x_{i}} \in c_{j}\} \cup \{u_{i}u_{i}' : 1 \leq i \leq n\}\]
Note that the subgraph induced by $N_{2}(v)$ is a {\it matching}, i.e. a disjoint union of copies of $K_{2}$. Every maximal independent set of $N_{2}(v)$ contains exactly one of $u_{i}$ and $u_{i}'$, for every $1 \leq i \leq n$.

\begin{lemma}
\label{satreduction} An instance $I$ of SAT is satisfiable if and only if there exists in $G_{I}$ an independent set $S \subseteq N_{2}(v)$ which dominates $N(v)$.
\end{lemma}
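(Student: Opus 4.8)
The plan is to exhibit an explicit correspondence between truth assignments to $\mathcal{X}$ and independent sets $S \subseteq N_2(v)$, and then to observe that under this correspondence the domination of $N(v)$ translates literally into the satisfaction of every clause. First I would fix the interpretation of the graph: since every edge from $v$ goes to a clause vertex $w_j$, we have $N(v)=\{w_1,\dots,w_m\}$, and since the only vertices at distance $2$ from $v$ are the variable vertices, we have $N_2(v)=\{u_i,u_i' : 1 \leq i \leq n\}$. The induced subgraph on $N_2(v)$ is the matching $\{u_iu_i' : 1 \leq i \leq n\}$, so an independent set $S \subseteq N_2(v)$ is exactly a set containing at most one vertex from each pair $\{u_i,u_i'\}$. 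The crucial translation is that $w_j$ is adjacent to $u_i$ precisely when $x_i \in c_j$, and to $u_i'$ precisely when $\overline{x_i}\in c_j$; hence $w_j \in N[S]$ if and only if $c_j$ contains a literal whose associated vertex lies in $S$.

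For the forward direction, given a satisfying assignment $t$ I would define
\[
S \,=\, \{u_i : t(x_i)=1\} \,\cup\, \{u_i' : t(x_i)=0\}.
\]
I would check that $S \subseteq N_2(v)$ and that $S$ is independent, which is immediate because $S$ picks exactly one vertex from each matched pair $\{u_i,u_i'\}$ and distinct pairs induce no edges. To see that $S$ dominates $N(v)$, I would take an arbitrary $w_j$; since $t$ satisfies $c_j$, there is a true literal $l \in c_j$, and by the definition of $S$ the vertex associated with $l$ (namely $u_i$ if $l=x_i$, or $u_i'$ if $l=\overline{x_i}$) belongs to $S$ and is adjacent to $w_j$, so $w_j \in N[S]$.

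For the converse, given an independent $S \subseteq N_2(v)$ that dominates $N(v)$, I would define a truth assignment by setting $t(x_i)=1$ if $u_i \in S$ and $t(x_i)=0$ otherwise. The domination hypothesis then forces satisfaction: for each $j$, $w_j \in N[S]$ means some neighbor of $w_j$ lies in $S$, and that neighbor is either a $u_i$ with $x_i \in c_j$ (so $t(x_i)=1$ makes $x_i$ true) or a $u_i'$ with $\overline{x_i}\in c_j$ (so $t(x_i)=0$ makes $\overline{x_i}$ true), and in either case $c_j$ is satisfied. The one point requiring care, and the only place where the argument is not a mechanical reading-off of the definitions, is that an independent dominating $S$ need not be maximal in $N_2(v)$: for some indices neither $u_i$ nor $u_i'$ need lie in $S$. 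This is precisely why I define $t(x_i)=0$ as the default rather than attempting a perfect bijection with full selections; such indices correspond to variables whose value is irrelevant to satisfying the clauses already covered, so the assignment remains well defined and the satisfaction argument is unaffected.
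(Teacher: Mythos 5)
Your proof is correct and follows essentially the same approach as the paper's: both directions use the identical correspondence, taking $S = \{u_i : t(x_i)=1\} \cup \{u_i' : t(x_i)=0\}$ for the forward direction and defining $t(x_i)=1$ iff $u_i \in S$ (with $0$ as default) for the converse. Your additional remark about non-maximal dominating sets correctly identifies why the default value causes no trouble, a point the paper's proof handles implicitly by the same choice of definition.
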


\begin{proof} 
Suppose that there exists an independent set $S$ of $N_{2}(v)$ which dominates $N(v)$. Define a truth assignment $t : \mathcal{X} \longrightarrow \{0,1\}$ as follows. For every $1 \leq i \leq n$, $t(x_{i}) = 1$ if and only if $u_{i} \in S$. Otherwise, $t(x_{i})=0$. The fact that $S$ dominates all vertices of $N(v)$ implies that all clauses of $\mathcal{C}$ are satisfied by $t$.

On the other hand, assume that there exists a truth assignment $t : \mathcal{X} \longrightarrow \{0,1\}$ which satisfies $\mathcal{C}$. Define $S = \{u_{i} : t(x_{i})=1\} \cup \{u_{i}' : t(x_{i})=0\}$. Clearly, $S \subseteq N_{2}(v)$. The fact that for every $1 \leq i \leq n$ the set $S$ contains exactly one of  $u_{i}$ and $u_{i}'$ implies that $S$ is independent. The fact that $t$ satisfies all clauses of $\mathcal{C}$ implies that $S$ dominates all vertices of $N(v)$.
\end{proof}

\begin{lemma}
\label{c6free} Let $I = (\mathcal{X}, \mathcal{C})$ be an instance of 23SAT without bad pairs. Then $G_{I} \in \mathcal{G}(\widehat{C}_{6})$.
\end{lemma}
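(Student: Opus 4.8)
The plan is to suppose for contradiction that $G_I$ contains a cycle $Z$ on six distinct vertices, and to extract from it a violation of one of the two features of a bad-pair-free 23SAT instance: that every clause contains at most one major literal, and that no bad pair exists. The whole argument rests on the layered structure of $G_I$. The vertex $v$ is adjacent only to the clause vertices $w_j$; the clause vertices are pairwise non-adjacent (there are no $w_j w_{j'}$ edges); and among the literal vertices the only edges are the matching edges $u_i u_i'$, so every literal vertex has at most one literal-vertex neighbour.

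First I would dispose of the case $v \in Z$. Both cycle-neighbours of $v$ are clause vertices $w_a, w_b$, so $Z = v - w_a - p_1 - p_2 - p_3 - w_b - v$, and $p_1, p_3$ must be literal vertices. The middle vertex $p_2$ cannot be a literal vertex, since it would have to be adjacent to the two distinct literal vertices $p_1, p_3$, whereas a literal vertex has at most one literal-vertex neighbour. Hence $p_2$ is a clause vertex $w_c$; but then $p_1$ lies in both $c_a$ and $c_c$ and $p_3$ lies in both $c_c$ and $c_b$, so $p_1, p_3$ are two distinct major literals inside $c_c$, contradicting the 23SAT restriction.

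For the case $v \notin Z$, I would let $k$ be the number of clause vertices on $Z$. Since clause vertices are pairwise non-adjacent they form an independent set of $C_6$, so $k \le 3$. A short counting step — each clause vertex on $Z$ supplies two clause–literal edges, while each literal vertex lies in at most one matching edge — shows that $Z$ uses exactly $6 - 2k$ matching edges, and the disjointness of those matching edges forces $2(6-2k) \le 6-k$, hence $k \ge 2$. This leaves only $k = 2$ and $k = 3$. When $k = 3$ the cycle alternates between clause and literal vertices, so each literal lies in two clauses (is major) and each clause then contains two major literals, again contradicting 23SAT. When $k = 2$ the cycle is forced into the shape $w_a - u_i - u_i' - w_b - u_j - u_j' - w_a$; reading off which literal lies in which clause exhibits literals $\ell, \ell'$ with $\{\ell, \ell'\} \subseteq c_a$ and $\{\overline{\ell}, \overline{\ell'}\} \subseteq c_b$, i.e. a bad pair, contradicting the hypothesis.

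The main obstacle is the $k = 2$ case: I must argue that the cycle shape is genuinely forced, namely that each of the four literal vertices on $Z$ is matched inside $Z$ and therefore has exactly one clause-neighbour and one matching-neighbour, since only this rigidity lets me read off a bad pair and invoke the ``no bad pairs'' hypothesis. The remaining cases consume only the weaker ``at most one major literal per clause'' property, which is automatic for any 23SAT instance; it is precisely the $k = 2$ analysis that uses bad-pair-freeness, which is why that hypothesis appears in the statement.
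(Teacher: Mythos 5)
Your proposal is correct and follows essentially the same route as the paper's proof: split on whether $v$ lies on the cycle, use the ``at most one major literal per clause'' condition to kill the $v \in Z$ case and the three-clause-vertex case, and use bad-pair-freeness for the two-clause-vertex case. Your version is somewhat more explicit than the paper's (the argument that the middle vertex in the $v\in Z$ case must be a clause vertex, the counting inequality $2(6-2k)\le 6-k$ forcing $k\ge 2$, and the rigidity of the $k=2$ shape are asserted rather than derived in the paper), but the underlying ideas are identical.
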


\begin{proof}
Assume on the contrary that $v$ belongs to a cycle of length 6. Then there exist vertices $w_{1}$, $w_{2}$, $w_{3}$ in $N(v)$ and $z_{1}$, $z_{2}$ in $N_{2}(v)$ such that $(v,w_{1},z_{1},w_{2},z_{2},w_{3})$ is a cycle of length 6. Let $l_{1}$ and $l_{2}$ be the literals which represent $z_{1}$ and $z_{2}$, respectively. Let $c_{1}$, $c_{2}$ and $c_{3}$ be the clauses which represent $w_{1}$, $w_{2}$ and $w_{3}$, respectively. Then $l_{1}$ is a major literal since it belongs to both $c_{1}$ and $c_{2}$. Similarly, $l_{2}$ is a major literal, since it belongs to both $c_{2}$ and $c_{3}$. Hence $c_{2}$ contains two major literals, which is a contradiction. Therefore, $v$ is not a part of a cycle of length 6.

Assume on the contrary that $G_{I} \setminus \{v\}$ contains a cycle $C$ of length 6. Since $N(v)$ is independent, $|C \cap N(v)| \leq 3$. The fact that every connected component of $N_{2}(v)$ is $K_{2}$ implies that $|C \cap N(v)| \geq 2$. 

If $|C \cap N(v)| = 3$ and $|C \cap N_{2}(v)| = 3$ then the vertices in $C \cap N_{2}(v)$ represent major literals, and the vertices of $C \cap N(v)$ represent clauses that each of them contains at least 2 major literals, which is a contradiction. Consequently, $|C \cap N(v)| = 2$ and $|C \cap N_{2}(v)| = 4$. That means that either $C = (u_{1},u_{1}',w_{1},u_{2},u_{2}',w_{2})$ or $C = (u_{1},u_{1}',w_{1},u_{2}',u_{2},w_{2})$ for $u_{1}$, $u_{1}'$, $u_{2}$, $u_{2}'$ in $N_{2}(v)$ and $w_{1}$, $w_{2}$ in $N(v)$. Therefore, there exist clauses $c_{1}$ and $c_{2}$ and literals $l_{1}$ and $l_{2}$ such that $\{l_{1},l_{2}\} \subseteq c_{1}$ and $\{\overline{l_{1}},\overline{l_{2}}\} \subseteq c_{2}$. Hence, $I$ contains a bad pair, which is a contradiction. $G_{I} \in \mathcal{G}(\widehat{C}_{6})$.
\end{proof}

\begin{theorem}
\label{shedc6conpc} Recognizing shedding vertices is \textbf{co-NP}-complete, even for graphs without cycles of length 6.
\end{theorem}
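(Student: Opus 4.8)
The plan is to assemble pieces that have already been established: membership in \textbf{co-NP} is immediate, and \textbf{co-NP}-hardness will follow from a polynomial reduction from the complement of the 23SAT problem, routed through the graph $G_I$ and the three preceding results.

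First I would verify that SHED lies in \textbf{co-NP}. A vertex $v$ fails to be shedding exactly when there is an independent set $S \subseteq N_{2}(v)$ dominating $N(v)$, and such an $S$ is a polynomially checkable witness; hence the complement of SHED is in \textbf{NP} and SHED is in \textbf{co-NP}. This argument imposes no restriction on the input, so it applies in particular to graphs without cycles of length $6$.

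Next, for hardness, I would reduce from the complement of 23SAT, which is \textbf{co-NP}-complete by Theorem \ref{sat23npc}. Given an arbitrary instance of 23SAT, first apply the algorithm of Theorem \ref{sat23withoutc6} to replace it, in polynomial time, by an equivalent instance $I=(\mathcal{X},\mathcal{C})$ containing no bad pairs. Then build $G_I$ and single out the vertex $v$. By Lemma \ref{c6free} we have $G_I \in \mathcal{G}(\widehat{C}_{6})$, so the output is a legitimate instance of SHED restricted to graphs without cycles of length $6$, and the entire construction is polynomial.

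Finally I would check the equivalence of answers, taking care with the direction. By Lemma \ref{satreduction}, $I$ is satisfiable if and only if $G_I$ contains an independent set $S \subseteq N_{2}(v)$ dominating $N(v)$, i.e., if and only if $v$ is \emph{not} shedding; taking complements, $I$ is unsatisfiable if and only if $v$ is shedding. Thus a yes-instance of the complement of 23SAT maps to a yes-instance of SHED and conversely, which together with membership in \textbf{co-NP} yields the theorem. The only genuine subtlety — and the one place a reader should slow down — is this logical inversion: because satisfiability corresponds to $v$ being \emph{not} shedding, it is the complement of 23SAT that reduces to SHED, exactly as a \textbf{co-NP}-hardness argument requires. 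All the technically demanding work, namely the \textbf{NP}-completeness of 23SAT, the elimination of bad pairs, and the $C_{6}$-freeness of $G_I$, has already been discharged in the preceding results, so the proof itself is essentially this assembly.
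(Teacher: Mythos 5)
Your proposal is correct and follows essentially the same route as the paper's own proof: both establish membership via the polynomially checkable witness for non-shedding, then combine Theorem \ref{sat23npc}, Theorem \ref{sat23withoutc6}, Lemma \ref{c6free}, and Lemma \ref{satreduction} into a reduction through the graph $G_I$. The only difference is presentational---the paper phrases the hardness as ``the complement of SHED is \textbf{NP}-complete via a reduction from 23SAT,'' while you phrase it as ``SHED is \textbf{co-NP}-hard via a reduction from the complement of 23SAT''---which is the same map with the same correctness argument.
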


\begin{proof}
It should be proved that the complement problem is \textbf{NP}-complete. An instance of the problem is $I = (G, v)$, where $G$ is a graph without cycles of length 6, and $v \in V(G)$. The instance is positive if and only if there exists an independent set in $N_{2}(v)$ which dominates $N(v)$.

The problem is obviously \textbf{NP}. We prove \textbf{NP}-completeness by a polynomial reduction from 23SAT. Let $I_{1}$ be an instance of 23SAT. By Theorem \ref{sat23withoutc6}, it is possible to find in polynomial time an equivalent instance $I_{2}$ of 23SAT without bad pairs. Let $I_{3} = (G_{I_{2}},v)$ be an instance of the complement of the SHED problem. By Lemma \ref{c6free}, $G_{I_{2}} \in \mathcal{G}(\widehat{C}_{6})$. By Lemma \ref{satreduction}, $I_{2}$ is satisfiable if and only if there exists an independent set in $N_{2}(v)$ which dominates $N(v)$.
\end{proof}

\section{Relating Edges}
Theorem \ref{rec6} is the main result of this section.

\begin{theorem}
\label{rec6} 
Recognizing relating edges is \textbf{NP}-complete for graphs without cycles of length 6.
\end{theorem}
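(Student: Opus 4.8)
The plan is to prove Theorem~\ref{rec6} by a polynomial reduction from the complement of the SHED problem, restricted to graphs without cycles of length $6$, whose \textbf{NP}-completeness is exactly Theorem~\ref{shedc6conpc}. The guiding principle is Theorem~\ref{shedrec6}, which (for a graph in $\mathcal{G}(\widehat{C}_4,\widehat{C}_5,\widehat{C}_6)$ and an edge $xy$ with $N(x)\cap N(y)=\varnothing$ and both endpoints of degree at least $2$) equates ``$xy$ is relating'' with ``neither $x$ nor $y$ is shedding.'' I would like to take a graph $G$ in which I want to test whether a single vertex $v$ is \emph{non}-shedding, and attach a small gadget that converts this into the question of whether a designated edge is relating.

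First I would start from an instance $(G,v)$ of the complement of SHED, where $G\in\mathcal{G}(\widehat{C}_6)$, and build an augmented graph $G'$ as follows. Introduce a new vertex $x$ (playing the role of $v$'s ``partner'') and join it to $v$, so that the candidate relating edge is $e=xv$. The difficulty is that Theorem~\ref{shedrec6} requires both endpoints of the edge to be \emph{non}-shedding simultaneously, and it is stated only for graphs avoiding $C_4$, $C_5$ \emph{and} $C_6$, whereas here I am only allowed to forbid $C_6$. So I cannot invoke Theorem~\ref{shedrec6} as a black box; instead I would argue directly from the definition of a relating edge. The key step is to design the gadget attached to $x$ so that (i) $x$ is forced to be non-shedding ``for free,'' i.e.\ there is always an independent set in $N_2(x)$ dominating $N(x)$ built from the gadget, and (ii) a witness $S$ making $e=xv$ relating corresponds exactly to an independent set $S\subseteq N_2(v)$ in the original graph $G$ that dominates $N_G(v)$. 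Concretely, a witness $S$ for $xv$ being relating is an independent set with $S\cup\{x\}$ and $S\cup\{v\}$ both maximal; maximality of $S\cup\{x\}$ forces $S$ to dominate $N(v)\setminus\{x\}=N_G(v)$ while avoiding $N[x]$, which is precisely a non-shedding witness for $v$ in $G$, and maximality of $S\cup\{v\}$ is arranged to be automatic via the gadget on the $x$-side.

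The main obstacle, and where I would spend most of the care, is the girth bookkeeping: I must verify that $G'=G_{I}$-style augmentation introduces no $C_6$. Since $G$ already avoids $C_6$, I only need to check that the gadget and its interface with $v$ create no new $6$-cycle. This is the same flavor of argument as Lemma~\ref{c6free}: enumerate how a putative $6$-cycle could distribute its vertices between the old graph and the gadget, and rule each case out by the gadget's local structure (making the gadget a tree or a matching-like attachment, so that any cycle using gadget vertices is too short or too long). I would choose the gadget minimally — likely attaching to $x$ a pendant structure plus one or two extra vertices that guarantee $x$ has degree at least $2$ and is non-shedding — precisely so that this case analysis stays short.

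Finally I would assemble the reduction: given $(G,v)$, output $(G',e=xv)$, which is computable in polynomial time and lies in $\mathcal{G}(\widehat{C}_6)$ by the girth check. Correctness is the equivalence ``$v$ is non-shedding in $G$ $\iff$ $e$ is relating in $G'$,'' proved in both directions via the witness correspondence above: a non-shedding witness $S\subseteq N_2(v)$ dominating $N_G(v)$ extends to a relating witness for $e$, and conversely any relating witness for $e$ restricts to such an $S$. Together with membership in \textbf{NP} (a witness independent set is polynomially verifiable), this establishes \textbf{NP}-completeness of RE on graphs without cycles of length $6$.
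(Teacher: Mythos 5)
Your proposal is essentially the paper's own proof: a polynomial reduction from the complement of SHED on $\mathcal{G}(\widehat{C}_6)$ in which one attaches a new vertex to $v$ and asks whether the resulting edge is relating, with exactly the witness correspondence you describe (a non-shedding witness $S\subseteq N_2(v)$ extends to a maximal independent set of $G\setminus\{v\}$ giving a relating witness, and conversely a relating witness intersected with $N_2(v)$ is a non-shedding witness). The only difference is that you hedge about needing a larger gadget and a $C_6$ case analysis, whereas the paper's gadget is just a single pendant vertex adjacent to $v$ alone --- which already makes both maximality conditions work and creates no new cycles whatsoever, so the girth check is trivial and no appeal to Theorem~\ref{shedrec6} or to shedding properties of the new vertex is needed.
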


\begin{proof}
Clearly, the problem is \textbf{NP}. We prove \textbf{NP}-completeness by a polynomial reduction from the complement of the SHED problem for graphs without cycles of length 6. Let $I_{1} = (G,x)$ be an instance of the comlement of SHED such that $G \in \mathcal{G}(\widehat{C}_{6})$. Then $I_{1}$ is positive if and only if there exists an independent set in $N_{2}(x)$ which dominates $N(x)$. Define a new graph $G'$ as follows. $V(G') = V(G) \cup \{y\}$, when $y$ is a new vertex, and $E(G') = E(G) \cup \{xy\}$. The fact that $G \in \mathcal{G}(\widehat{C}_{6})$ implies $G' \in \mathcal{G}(\widehat{C}_{6})$. Let $I_{2} = (G', xy)$ be an instance of the RE problem. It remains to prove that $I_{1}$ and $I_{2}$ are equivalent.

Suppose $I_{1}$ is positive. The graph $G$ contains an independent set $S \subseteq N_{2}(x)$ which dominates $N(x)$. Let $S^{*}$ be a maximal independent set of $G \setminus \{x\}$ which contains $S$. In the graph $G'$ the sets $S^{*} \cup \{x\}$ and $S^{*} \cup \{y\}$ are maximal independent sets. Therefore, $S^{*}$ is a witness that $xy$ is relating, and $I_{2}$ is positive.

Conversely, if $I_{2}$ is positive then there exists an independent set $S \subseteq V(G')$ such that $S \cup \{x\}$ and $S \cup \{y\}$ are maximal independent sets of $G'$. Obviously, $S \cap N_{2}(x)$ is an independent set of $N_{2}(x)$ which dominates $N(x)$, and $I_{1}$ is positive.
\end{proof}

\section{Conclusions}
We proved that the RE problem is \textbf{NP}-complete for graphs without cycles of length 6. This result is suprising and unexpected since the RE problem is polynomially solvable for graphs without cycles of lengths 6 and 4 \cite{lt:relatedc4}, for graphs without cycles of lengths 6 and 5 \cite{lt:wc456}, and for graphs without cycles of lengths 6 and 7 \cite{tankus:c67}. Each of the algorithms presented in the three above-mentioned papers works as follows. It finds an independent set $S_{x}$ in $N_{2}(x) \cap N_{3}(y)$ which dominates $N(x) \cap N_{2}(y)$. Then it finds similarly an independent set $S_{y}$ in $N_{2}(y) \cap N_{3}(x)$ which dominates $N(y) \cap N_{2}(x)$. Since the graph does not contain cycles of length 6, there are no edges between $S_{x}$ and $S_{y}$. Hence, if both $S_{x}$ and $S_{y}$ exist then $S = S_{x} \cup S_{y}$ is an independent set which is a witness that $xy$ is relating, and the algorithm terminates announcing that the edge is relating. On the other hand, if at least one of $S_{x}$ and $S_{y}$ does not exist then the instance of the problem is negative. First, we conjectured that an algorithm for graphs without cycles of length 6 would work according to the same principle, and thus generalize the last three results. However, we found out that the existence of such an algorithm would imply that {\bf P=NP}.

Theorem \ref{shedrec6} shows a connection between the RE and SHED problems. In each of these problems it should be decided whether there exists an independent set in a subset of $V(G)$ which dominates another subset of $V(G)$. Hence, for many families of graphs either RE is \textbf{NP}-complete and SHED is co-\textbf{NP}-complete, or both problems are polynomially solvable. That holds also for graphs without cycles of length 6, the family of graphs which is studied in this paper. However, some families are exceptional. For example, concerning graphs without cycles of lengths 4 and 5, RE is \textbf{NP}-complete
\cite{lt:relatedc4}, but SHED is a polynomially solvable \cite{lt:w2}.


\begin{thebibliography}{99}                                                                                               %
\bibitem {ad:223SAT} A. Ahadi, A. Dehghan, \emph{(2/2/3)-SAT problem and its applications in dominating set problems}, Discrete Mathematics \& Theoretical Computer Science \textbf{21} (2019) \#9.

\bibitem {mm:cis} E. Boros, V. Gurvich, M. Milani\v{c}, \emph{On CIS circulants}, Discrete Mathematics \textbf{318} (2014) 78-95.

\bibitem {bh:circulant} J. I. Brown, R. Hoshino,
\emph{Well-covered circulant graphs},
Discrete Mathematics \textbf{311} (2011) 244-251.

\bibitem {bnz:related}J. I. Brown, R. J. Nowakowski, I. E. Zverovich,
\emph{The structure of well-covered graphs with no cycles of length }$4$,
Discrete Mathematics \textbf{307} (2007) 2235-2245.

\bibitem {cer:degree}Y. Caro, N. Ellingham, G. F. Ramey, \emph{Local structure
when all maximal independent sets have equal weight}, SIAM Journal on Discrete
Mathematics \textbf{11} (1998) 644-654.

\bibitem {cst:structures}Y. Caro, A. Seb\H{o}, M. Tarsi, \emph{Recognizing
greedy structures}, Journal of Algorithms \textbf{20} (1996) 137-156.

\bibitem {cs:note}V. Chvatal, P. J. Slater, \emph{A note on well-covered
graphs}, Quo vadis, Graph Theory Ann Discr Math 55, North Holland, Amsterdam,
1993, 179-182.

\bibitem {d:m223sat}J. D\"{o}cker, \emph{Monotone 3-Sat-(2, 2) is NP-complete}, (2019) arXiv:1912.08032

\bibitem {eis:2sat}S. Even, A. Itai, A. Shamir, \emph{On the complexity of timetable and multicommodity flow problems}, SIAM Journal on Computing \textbf{5} (1976) 691-703.

\bibitem {fm:rem3w2} C. Feghali, M. Marin, \emph{Three remarks on $\mathbf{W_2}$ graphs}, Theoretical Computer Science, {\bf 990} (2024) 114403 https://doi.org/10.1016/j.tcs.2024.114403.

\bibitem {fhn:wcg5}A. Finbow, B. Hartnell, R. Nowakowski, \emph{A
characterization of well-covered graphs of girth 5 or greater}, Journal of
Combinatorial Theory Ser. B. \textbf{57} (1993) 44-68.

\bibitem {fhn:wc45}A. Finbow, B. Hartnell, R. Nowakowski \emph{A
characterization of well-covered graphs that contain neither 4- nor 5-cycles},
Journal of Graph Theory \textbf{18} (1994) 713-721.

\bibitem {gj:NPC}M. R. Garey, D. S. Johnson, \emph{Computers and
Intractability; A Guide to the Theory of NP-Completeness}, W. H. Freeman \&
Co. New York, NY, USA, 1990.

\bibitem{gjs:maxsat2}M. R. Garey, D. S. Johnson, L. Stockmeyer, \emph{Some simplified NP-complete graph problems}, Theoretical Computer Science \textbf{1} (1976) 237-267.

\bibitem {Gold1978}E. M. Gold, \emph{Complexity of automaton identification from given data}, Information and Control \textbf{37} (1978) 302--320.

\bibitem{h:maxsat}M. Halaby, \emph{On the computational complexity of MaxSAT}, Electronic Colloquium on Computational Complexity (2016) \#34.

\bibitem {h:hornsat}A. Horn, \emph{On sentences which are true of direct unions of algebras}, Journal of Symbolic Logic, \textbf{16} (1951) 14-21.

\bibitem {lm:shed}V. E. Levit, E. Mandrescu, \emph{$\mathbf{W_2}$-graphs and shedding vertices}, Electronic Notes in Discrete Mathematics \textbf{61} (2017) 797-803.

\bibitem {lt:wc4567}V. E. Levit, D. Tankus \emph{Weighted well-covered graphs
without $C_{4}$, $C_{5}$, $C_{6}$, $C_{7}$}, Discrete Applied Mathematics
\textbf{159} (2011) 354-359.

\bibitem {lt:relatedc4}V. E. Levit, D. Tankus, \emph{On relating edges in
graphs without cycles of length 4}, Journal of Discrete Algorithms \textbf{26}
(2014) 28-33.

\bibitem {lt:equimatchable}V. E. Levit, D. Tankus, \emph{Weighted well-covered
claw-free graphs}, Discrete Mathematics \textbf{338} (2015) 99-106.

\bibitem {lt:wc456}V. E. Levit, D. Tankus, \emph{Well-covered graphs without cycles of lengths 4, 5 and 6}, Discrete Applied Mathematics \textbf{186}
(2015) 158-167.

\bibitem {lt:npc}V. E. Levit, D. Tankus, \emph{Complexity results for generating subgraphs}, Algorithmica \textbf{80}
(2018) 2384-2399.

%\bibitem {lt:revisited}V. E. Levit, D. Tankus, \emph{Recognizing generating
%subgraphs revisited}, International Journal of Foundations of Computer Science \textbf{32} (2021) 93-114.

\bibitem {lt:w2}V. E. Levit, D. Tankus, \emph{Recognizing $W_{2}$ graphs}, Graphs and Combinatorics (2024) accepted.

\bibitem {papa:cc} C. H. Papadimitriou, \emph{Combinatorial Complexity}, Pearson, 1993.

\bibitem {ptv:chordal}E. Prisner, J. Topp and P. D. Vestergaard,
\emph{Well-covered simplicial, chordal and circular arc graphs}, Journal of
Graph Theory \textbf{21} (1996) 113-119.

\bibitem {sknryn:compwc}R. S. Sankaranarayana, L. K. Stewart, \emph{Complexity
results for well-covered graphs}, Networks \textbf{22} (1992) 247-262.

\bibitem {s:x3sat}T. J. Schaefer, \emph{The complexity of satisfiability problems}, Proc. 10th Ann. ACM Symp. on Theory of Computing (1978) 216-226.

\bibitem {Staples:thesis}J. W. Staples, \emph{On some subclasses of well-covered graphs}, Ph.D. Thesis, 1975, Vanderbilt University.

\bibitem {tankus:c67}D. Tankus, \emph{Recognizing generating subgraphs in
graphs without cycles of lengths 6 and 7}, Discrete Applied Mathematics \textbf{283} (2020) 189-198.

\bibitem {tata:wck13f}D. Tankus, M. Tarsi, \emph{Well-covered claw-free
graphs}, Journal of Combinatorial Theory Ser. B. \textbf{66} (1996) 293-302.

\bibitem {tata:wck13fn}D. Tankus, M. Tarsi, \emph{The structure of
well-covered graphs and the complexity of their recognition problems}, Journal
of Combinatorial Theory Ser. B. \textbf{69} (1997) 230-233.

\bibitem {tov:sat}C. A. Tovey, \emph{A simplified NP-complete satisfiability problem}, Discrete Applied Mathematics \textbf{8} (1984) 85-89.

\bibitem {w:shed} R. Woodroofe, \emph{Vertex decomposable graphs and obstructions to shellability}, Proceedings of the American Mathematical Society \textbf{137} (2009), 3235-3246.

\end{thebibliography}
\end{document}